\definecolor{NoteColor}{rgb}{1,0,0}
\renewcommand{\textsc}{\textcolor{red}}
\newtheorem{theorem}{\rm\bf Th\'eor\`eme}
\newtheorem{lemma}[theorem]{\rm\bf Lemme}
\newtheorem*{theorem 1}{\rm\bf Proposition 1}
\newtheorem*{theorem 2}{\rm\bf Proposition 2}
\theoremstyle{definition}
\theoremstyle{remark}
\newtheorem{remark}[theorem]{\rm\bf Remarque}
\def\interieur#1{\mathord{\mathop{\kern 0pt #1}\limits^\circ}}
\title[Hom\'eomorphismes et nombre d'intersection]{Hom\'eomorphismes et nombre d'intersection}
\author[Ken'ichi Ohshika et Athanase Papadopoulos]{Ken'ichi Ohshika et Athanase Papadopoulos}
 \address{A. Papadopoulos, Institut de Recherche Mathématique Avancée (Université de Strasbourg et CNRS),
7 rue René Descartes
67084 Strasbourg Cedex France,  papadop@math.unistra.fr ; K. Ohshika, 
Department of Mathematics, Graduate School of Science, Osaka University Toyonaka, Osaka 560-0043, Japan, 
ohshika@math.sci.osaka-u.ac.jp}
\date{\today}
\begin{document}
   
 \maketitle
 
 \begin{abstract}
 On démontre deux résultats de rigidité pour des groupes d'automorphismes de l'espace $\mathcal{ML}(S)$ des laminations géodésiques mesurées d'une surface fermée hyperbolique $S$ et de l'espace $\mathcal{PML}(S)$ des laminations géodésiques mesurées projectives de $S$. Ils concernent les automorphismes de  $\mathcal{ML}(S)$ préservant le nombre d'intersection géométrique entre laminations et les homéomorphismes de $\mathcal{PML}(S)$ préservant les ensembles de zéros de ces fonctions.

 \medskip
 
 \noindent {\sc abstract}. We prove two rigidity results for automorphism groups of the spaces $\mathcal{ML}(S)$ of measured laminations on a closed hyperbolic surface $S$ and  $\mathcal{PML}(S)$ of projective measured laminations on this surface. The results  concern the homeomorphisms of $\mathcal{ML}(S)$ that preserve the geometric intersection between laminations and the homeomorphisms of $\mathcal{PML}(S)$ that preserve the zero sets of these intersection functions.

 \medskip
 
 \noindent Classification AMS: 37E30, 57M99.
 
 \end{abstract}

Soit $S$ une surface fermée de genre $g\geq 2$, $\mathcal{ML}(S)$ l'espace des  laminations géodésiques mesurées de $S$ (pour une certaine structure hyperbolique) et $\mathcal{PML}(S)$ le projectifié de cet espace.  On désigne par $i$ le nombre d'intersection géométrique entre éléments de $\mathcal{ML}(S)$.

Dans cet article, on démontre les deux théorèmes de rigidité suivants :
\begin{theorem}
\label{ml1}
Soit $f: \mathcal{ML}(S)\to \mathcal{ML}(S)$ un homéomorphisme tel que pour tout $\lambda$ et $\mu$ dans $\mathcal{ML}(S)$, on a $i(\lambda, \mu)=i(f(\lambda), f(\mu))$.
Alors, $f$ est induit par un homéomorphisme de $S$.
\end{theorem}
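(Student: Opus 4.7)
Le plan suit le sch\'ema classique des r\'esultats de rigidit\'e de type Ivanov-Luo: on ram\`ene l'\'enonc\'e \`a une action sur les courbes simples ferm\'ees, pour laquelle on invoque le th\'eor\`eme d'Ivanov sur les automorphismes du complexe des courbes.

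On observe d'abord que $f$ est \emph{lin\'eaire}. En effet, la somme $\lambda + \mu$ dans $\mathcal{ML}(S)$ est caract\'eris\'ee par la propri\'et\'e $i(\lambda + \mu, \sigma) = i(\lambda, \sigma) + i(\mu, \sigma)$ pour tout $\sigma \in \mathcal{ML}(S)$, l'application $\alpha \mapsto i(\alpha, \cdot)$ \'etant injective (non-d\'eg\'en\'erescence du nombre d'intersection, par exemple en \'evaluant sur les courbes simples). Puisque $f$ est une bijection pr\'eservant $i$, on en d\'eduit $f(\lambda + \mu) = f(\lambda) + f(\mu)$ et, de fa\c{c}on analogue, $f(t\lambda) = t f(\lambda)$ pour tout $t > 0$. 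En particulier, $f$ envoie rayons sur rayons.

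On caract\'erise ensuite l'ensemble des rayons de courbes simples ferm\'ees pond\'er\'ees \`a l'aide de la seule fonction $i$. Pour $\lambda \in \mathcal{ML}(S)$, on pose $Z(\lambda) = \{\mu \in \mathcal{ML}(S) : i(\lambda, \mu) = 0\}$. La structure topologique de $Z(\lambda)$ (en particulier sa dimension) d\'epend de la nature de $\lambda$: les rayons de courbes simples pond\'er\'ees correspondent pr\'ecis\'ement aux $\lambda$ pour lesquels $Z(\lambda)$ atteint la dimension maximale, strict\'ement sup\'erieure \`a celle obtenue pour les multi-courbes ou les laminations remplissantes. Comme $f$ pr\'eserve $i$, on a $f(Z(\lambda)) = Z(f(\lambda))$, donc $f$ pr\'eserve l'ensemble des rayons de courbes simples pond\'er\'ees. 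Combin\'e \`a la lin\'earit\'e, ceci fournit une bijection $\hat f$ sur l'ensemble $\mathcal{S}(S)$ des classes d'isotopie de courbes simples ferm\'ees. Puisque $\gamma_1$ et $\gamma_2$ peuvent \^etre r\'ealis\'ees disjointes si et seulement si $i(\gamma_1, \gamma_2) = 0$, $\hat f$ pr\'eserve la relation de disjonction et induit donc un automorphisme du complexe des courbes $\mathcal{C}(S)$.

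Par le th\'eor\`eme d'Ivanov, $\hat f$ est induit par un hom\'eomorphisme $\phi : S \to S$. La bilin\'earit\'e de $i$ et la conservation des valeurs $i(\gamma_1, \gamma_2)$ sur les paires de courbes s'intersectant transversalement entra\^inent $f(t\gamma) = t\, \phi_*(\gamma)$ pour toute courbe simple $\gamma$ et tout $t > 0$. Enfin, comme $f$ et $\phi_*$ sont continues et co\"incident sur l'ensemble des courbes simples pond\'er\'ees, dense dans $\mathcal{ML}(S)$ par un th\'eor\`eme de Thurston, on conclut $f = \phi_*$. L'obstacle principal est la caract\'erisation purement intersection-th\'eorique des rayons de courbes simples pond\'er\'ees (\'etape 3 ci-dessus), qui demande une analyse fine de la dimension et de la structure de $Z(\lambda)$ pour distinguer courbes, multi-courbes et laminations remplissantes; la lin\'earit\'e \'etablie \`a l'\'etape 2 simplifie n\'eanmoins cette analyse en rendant accessibles les outils de l'alg\`ebre lin\'eaire appliqu\'es au c\^one $\mathcal{ML}(S)$.
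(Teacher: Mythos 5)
Votre esquisse suit pour l'essentiel la m\^eme strat\'egie que l'article : caract\'erisation des courbes simples pond\'er\'ees par la dimension maximale de l'ensemble nul $Z(\lambda)$, action induite sur le complexe des courbes, th\'eor\`eme d'Ivanov, normalisation des poids, puis conclusion par densit\'e et continuit\'e. Deux variantes m\'eritent d'\^etre signal\'ees. D'une part, vous obtenez directement l'homog\'en\'eit\'e $f(t\lambda)=t f(\lambda)$ \`a partir de l'injectivit\'e de $\lambda\mapsto i(\lambda,\cdot)$ et de la surjectivit\'e de $f$, l\`a o\`u l'article passe par l'invariance de l'espace nul sous changement de poids et la densit\'e des courbes pond\'er\'ees pour montrer que $f$ envoie rayons sur rayons, puis se ram\`ene au th\'eor\`eme \ref{pml1} d\'emontr\'e auparavant ; votre argument est plus direct. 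D'autre part, pour montrer que les poids $m_\gamma$ valent $1$, vous exploitez des paires de courbes qui s'intersectent (la pr\'eservation de $i$ donne $m_{\gamma_1}m_{\gamma_2}=1$ d\`es que $i(\gamma_1,\gamma_2)\neq 0$, d'o\`u $m_\gamma=1$ via un triplet de courbes deux \`a deux s\'ecantes), tandis que l'article utilise l'additivit\'e sur les courbes disjointes et la connexit\'e du complexe des courbes ; les deux arguments aboutissent. Une seule impr\'ecision est \`a corriger : l'affirmation que $f$ est \og lin\'eaire\fg\ n'a pas de sens global, car $\mathcal{ML}(S)$ n'est pas stable par addition --- la somme $\lambda+\mu$ n'est d\'efinie que lorsque $i(\lambda,\mu)=0$. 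Cela ne compromet pas la d\'emonstration, puisque seuls l'homog\'en\'eit\'e et l'additivit\'e sur les paires disjointes (c'est le lemme \ref{add} de l'article) sont effectivement utilis\'ees, mais l'\'enonc\'e de \og lin\'earit\'e\fg\ devrait \^etre restreint en cons\'equence.
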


\begin{theorem}
\label{pml1}
Soit $f: \mathcal{PML}(S)\to \mathcal{PML}(S)$  un homéomorphism tel que pour tout  $\lambda$ et $\mu$ dans $\mathcal{PML}(S)$, on a l'équivalence $i(\lambda, \mu)=0 \Leftrightarrow i(f(\lambda), f(\mu))=0$.
Alors, $f$ est induit par un homéomorphisme de $S$.
\end{theorem}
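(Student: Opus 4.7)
The plan is to reduce Theorem~\ref{pml1} to the classical theorem of Ivanov--Korkmaz--Luo, which states that every simplicial automorphism of the curve complex $\mathcal{C}(S)$ is induced by a homeomorphism of $S$. The bridge between the zero-set hypothesis and the curve complex will be provided by an intrinsic topological description of the projective classes of simple closed curves inside $\mathcal{PML}(S)$, phrased only in terms of the topology of $\mathcal{PML}(S)$ and of the zero-sets $Z_{[\lambda]} := \{[\mu] \in \mathcal{PML}(S) : i(\lambda,\mu)=0\}$.

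\textbf{First step.} I would first characterize the projective classes of weighted multicurves in $\mathcal{PML}(S)$ (the ``rational points'') by a property preserved by any zero-set-preserving self-homeo\-mor\-phism. The key point is that the structure of $Z_{[\lambda]}$ is very sensitive to the topological type of $\lambda$: for a weighted multicurve, $Z_{[\lambda]}$ contains a large stratum coming from measured laminations supported in the complementary subsurface, whereas for a lamination with at least one minimal component that is not a simple closed curve the zero-set is strictly smaller and has a different local shape near $[\lambda]$. Among rational points, the projective classes of single simple closed curves are then characterized as the maximal ones for the partial order $[\gamma] \leq [\gamma'] \iff Z_{[\gamma]} \subseteq Z_{[\gamma']}$, since adding components to a multicurve strictly shrinks $Z$. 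Because $f$ is a homeomorphism preserving the zero-set relation, it preserves all these intrinsic notions; hence it sends projective classes of simple closed curves to projective classes of simple closed curves.

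\textbf{Second step.} For two simple closed curves $c, c'$ on $S$, one has $i([c],[c']) = 0$ if and only if $c$ and $c'$ are isotopic or disjoint. Consequently, the restriction of $f$ to projective classes of simple closed curves yields a bijection of the set of isotopy classes of simple closed curves that preserves disjointness, i.e.\ a simplicial automorphism of the curve complex $\mathcal{C}(S)$. By the Ivanov--Korkmaz--Luo theorem, this automorphism is induced by an element of the extended mapping class group, realized by some homeomorphism $\phi : S \to S$. The induced homeomorphism $\phi_* : \mathcal{PML}(S) \to \mathcal{PML}(S)$ preserves the full intersection function $i$, in particular its zero-sets, and coincides with $f$ on the set of projective classes of simple closed curves, which is dense in $\mathcal{PML}(S)$. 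By continuity of $f$ and $\phi_*$, we conclude $f = \phi_*$.

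The main obstacle is the first step: producing an intrinsic characterization of projective classes of simple closed curves using only the topology of $\mathcal{PML}(S)$ together with the zero-set data of $i$, without ever appealing to the actual value of $i$. Once such a characterization is available, the combinatorial second step is elementary, the reduction to the curve complex is classical, and the final density-and-continuity argument is routine. Note that Theorem~\ref{ml1}, which concerns the non-projective space and preserves the full value of $i$, could in principle also be used as input, but the approach above is designed to work directly from the weaker zero-set hypothesis of Theorem~\ref{pml1}.
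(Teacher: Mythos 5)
Your overall architecture is exactly that of the paper: identify the projective classes of simple closed curves intrinsically from the zero-set data, deduce that $f$ induces a simplicial automorphism of $\mathcal{C}(S)$, invoke Ivanov's theorem to realize it by a homeomorphism $\phi$ of $S$, and conclude $f=\phi_*$ by density of the rational points in $\mathcal{PML}(S)$ and continuity. Your second step and the closing density-and-continuity argument are the paper's verbatim. The problem is that your first step --- which you yourself flag as ``the main obstacle'' --- is precisely the mathematical content of the theorem, and you do not supply it. Saying that $Z_{[\lambda]}$ ``contains a large stratum'' for a multicurve and has ``a different local shape'' otherwise is a heuristic, not a criterion a homeomorphism can be shown to preserve. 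The paper's resolution is concrete: it uses the topological dimension of the null space $N(\lambda)$, proving that $\lambda$ is a simple closed curve if and only if $\dim N(\lambda)=6g-8$ (Lemme \ref{scc}), and that $\lambda$ is an $n$-component weighted multicurve if and only if $\dim N(\lambda)=6g-7-n$ \emph{and} $N(\lambda)$ sits at the bottom of a nested chain $N(\lambda_n)\subset\cdots\subset N(\lambda_1)$ with $\dim N(\lambda_j)=6g-7-j$ (Lemme \ref{lem:courbes}); the dimension condition alone does not suffice, and the chain condition is controlled via an auxiliary lemma on ``completions'' of supports (Lemme \ref{compl}). Since $f$ is a homeomorphism carrying $N(\lambda)$ onto $N(f(\lambda))$, it preserves these dimensions, and that is what makes the characterization transportable.

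A second, more local issue: your proposed characterization of single curves as the \emph{maximal} elements for the order $Z_{[\gamma]}\subseteq Z_{[\gamma']}$ only works after you have already isolated the rational points, because maximality fails to single out curves in all of $\mathcal{PML}(S)$ --- a uniquely ergodic filling lamination $\lambda$ has $Z_{[\lambda]}=\{[\lambda]\}$ and is likewise maximal (indeed $N(\lambda')\subset N(\lambda)$ forces, by Lemme \ref{compl}, the completion of $|\lambda|$ to lie in that of $|\lambda'|$, so nothing lies strictly above $\lambda$ either). So the step you defer is not a technicality that can be postponed: without the dimension count (or an equivalent intrinsic invariant of the zero sets), the reduction to the curve complex does not get off the ground.
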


\`A partir des théorèmes \ref{ml1} et \ref{pml1}, on démontre deux autres résultats qui donnent une caractérisation des groupes d'automorphismes de $\mathcal{ML}(S)$ et soit $\mathcal{PML}(S)$ préservant certaines structures. Avant de les énoncer on introduit quelques notations.

Soit   $\mathrm{Mod}(S)$ le groupe modulaire de $S$, c'est-à-dire le groupe des classes d'isotopie d'homéomorphismes de $S$ préservant l'orientation, et $\mathrm{Mod}^*(S)$ le groupe modulaire  étendu de $S$, c'est-à-dire le groupe des classes d'isotopie d'homéomorphismes quelconques de $S$.

\'Etant donné un homéomorphisme $f:\mathcal{ML}(S)\to \mathcal{ML}(S)$, on dit que $f$ préserve le nombre d'intersection si pour toutes laminations mesur\'ees $\lambda$ et $\mu$ de $\mathcal{ML}(S)$, on a  $i(\lambda, \mu)=  i(f(\lambda), f(\mu))$.

On désigne par $\mathrm{Aut}(\mathcal{ML}(S))$ le groupe d'automorphismes de $\mathcal{ML}(S)$ préservant le nombre d'intersection. On a alors un homomorphisme naturel  $\mathrm{Mod}^*(S)\to  \mathrm{Aut}(\mathcal{ML}(S))$.

On a alors :
\begin{theorem}
\label{ml} 

Pour $g\geq 3$, l'homomorphisme naturel  $\mathrm{Mod}^*(S)\to  \mathrm{Aut}(\mathcal{ML}(S))$ est un isomorphisme. Pour $g=2$, cet homomorphisme est surjectif et son noyau est le groupe à deux éléments $\mathbb{Z}/2$, engendré par l'involution hyperelliptique de $S$.
\end{theorem}

 Même si le nombre d'intersection entre deux éléments $\lambda$ et $\mu$ de $\mathcal{PML}(S)$ n'est pas défini, la relation $i(\lambda, \mu)=0$ a un sens.

On notera par $\mathrm{Aut}(\mathcal{PML}(S))$ le groupe d'homéomorphismes $f$ de $\mathcal{PML}(S)$ satisfaisant la propriété suivante :
pour tout $\lambda$ et $\mu$ dans $\mathcal{PML}$, on a l'équivalence $i(\lambda, \mu)=0 \Leftrightarrow i(f(\lambda), f(\mu))=0$.

On a un homomorphisme naturel  $\mathrm{Mod}^*(S)\to  \mathrm{Aut}(\mathcal{PML}(S))$.

On démontrera aussi le théorème suivant :

\begin{theorem}
\label{pml} 
Pour $g\geq 3$,  l'homomorphisme naturel  $\mathrm{Mod}^*(S)\to  \mathrm{Aut}(\mathcal{PML}(S))$ est un isomorphisme. Pour $g=2$, cet homomorphisme est surjectif et son noyau est $\mathbb{Z}/2$, engendré par l'involution hyperelliptique de $S$.
\end{theorem}

 On démontrera d'abord the théorème \ref{pml1}, ensuite le théorème \ref{ml1}, et enfin les théorèmes \ref{ml} et \ref{pml}.
 
On commence par établir quelques lemmes.

\'Etant donné un élément $\lambda$ de $\mathcal{PML}(S)$, on lui associe l'espace nul défini par
\[N(\lambda)=\{\mu \in \mathcal{PML}(S) \mid i(\mu, \lambda)=0\}.\]

\'Etant donné un hom\'eomorphisme $f : \mathcal{PML}(S) \to \mathcal{PML}(S)$ satisfaisant l'hypothèse du théorème \ref{pml1}, on a $N(f(\lambda))=f(N(\lambda))$.

 On notera $|\lambda|\subset S$ le support de $\lambda$.

La propri\'et\'e suivante est immédiate :

\begin{quote}
(*) si $|\lambda|$ est contenu en $|\lambda'|$,  on a $N(\lambda') \subset N(\lambda)$.
\end{quote}

Par contre, $N(\lambda') \subset N(\lambda)$ n'implique pas n\'ecessairement $|\lambda| \subset |\lambda'|$. 

\'Etant donnée une lamination g\'eod\'esique mesurée $l$, on appelle compl\'etion de $l$ la lamination g\'eod\'esique mesurée obtenue à partir de $l$ en ajoutant tous les bord des surfaces de support des composantes minimales de $l$. Ici, une surface d'une composante minimale est la plus petite sous-surface à bord géodésique de $S$ contenant cette composante.

On a alors le lemme suivant :
\begin{lemma}
\label{compl}

Si $N(\lambda') \subset N(\lambda)$, la compl\'etion de $|\lambda|$ est contenue dans celle de $|\lambda'|$.
\end{lemma}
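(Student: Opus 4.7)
Le plan est de raisonner par contrapos\'ee~: supposons que la compl\'etion de $|\lambda|$ n'est pas contenue dans celle de $|\lambda'|$, et construisons un \'el\'ement $\mu\in\mathcal{PML}(S)$ tel que $i(\mu,\lambda')=0$ mais $i(\mu,\lambda)\neq 0$, ce qui contredira $N(\lambda')\subset N(\lambda)$. Comme $\lambda'\in N(\lambda')\subset N(\lambda)$, on a d\'ej\`a $i(\lambda,\lambda')=0$, donc $|\lambda|$ et $|\lambda'|$ n'ont aucune intersection transverse. On observe ensuite qu'il suffit d'exhiber une composante minimale $c$ de $\lambda$ non contenue dans la compl\'etion de $|\lambda'|$~: sinon, toute composante minimale $c$ de $\lambda$ est une sous-lamination minimale de la compl\'etion de $|\lambda'|$, donc soit une composante minimale de $\lambda'$, soit une courbe ferm\'ee simple formant une composante de $\partial S_{c'}$ pour une composante minimale $c'$ de $\lambda'$~; dans les deux cas $\partial S_c$ est inclus dans la compl\'etion de $|\lambda'|$, et l'inclusion cherch\'ee s'en d\'eduit. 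De plus, par minimalit\'e de $c$ et fermet\'e de $|\lambda'|$, combin\'ees \`a $i(\lambda,\lambda')=0$, on a n\'ecessairement $c\cap|\lambda'|=\emptyset$ (sinon $c\subset|\lambda'|$, et $c$ serait une composante minimale de $\lambda'$, donc dans la compl\'etion).

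Si $c$ est une courbe ferm\'ee simple, deux sous-cas se pr\'esentent. Si $c$ coupe transversalement un bord $\partial S_{c'}$ de la sous-surface support d'une composante minimale non ferm\'ee $c'$ de $\lambda'$, on prend pour $\mu$ cette composante de bord~: $i(\mu,\lambda')=0$ puisque $c'$ remplit $S_{c'}$ et que les autres composantes de $\lambda'$ ont des supports disjoints de $\partial S_{c'}$, tandis que $i(\mu,c)>0$ par hypoth\`ese sur $c$. Sinon, $c$ est enti\`erement contenue dans une pi\`ece $\Sigma_0$ obtenue en d\'ecoupant $S$ le long des $\partial S_{c_i'}$ et des composantes courbes ferm\'ees simples de $\lambda'$~; $c$ y est essentielle et non parall\`ele au bord (faute de quoi elle serait dans la compl\'etion), et $\Sigma_0$ contient alors une courbe simple $\mu$ coupant $c$, automatiquement disjointe de $|\lambda'|$.

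Si $c$ est une lamination minimale non ferm\'ee, remplissant sa sous-surface support $S_c$, on proc\`ede par une approximation \`a la Birkhoff d'une feuille de $c$. La fermet\'e de $|\lambda'|$ et sa disjonction avec les feuilles de $c$ permettent de choisir une transversale $T$ \`a une feuille $\ell$ de $c$ qui soit disjointe de $|\lambda'|$~; la r\'ecurrence de $\ell$ sur $T$ (d\'ecoulant de la minimalit\'e de $c$) permet de former, en refermant un long segment de $\ell$ par un petit arc sur $T$, une courbe ferm\'ee simple $\mu\subset S_c$, disjointe de $|\lambda'|$ et essentielle d\`es que la r\'ecurrence est choisie assez longue ; comme $c$ remplit $S_c$, il en r\'esulte $i(\mu,c)>0$. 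L'obstacle principal r\'eside dans ce dernier cas~: la construction doit garantir simultan\'ement la simplicit\'e de $\mu$, son essentialit\'e dans $S_c$ et l'\'evitement de $|\lambda'|$, ce qui repose sur la non-accumulation de $|\lambda'|$ sur les feuilles de $c$, cons\'equence directe de la fermet\'e de $|\lambda'|$ et de sa disjonction avec $c$.
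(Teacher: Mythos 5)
Votre d\'emonstration est correcte et suit essentiellement la m\^eme strat\'egie que celle de l'article : se ramener \`a une composante minimale de $\lambda$ non contenue dans la compl\'etion de $|\lambda'|$, puis exhiber une courbe simple ferm\'ee disjointe de $|\lambda'|$ la rencontrant transversalement, ce qui contredit $N(\lambda')\subset N(\lambda)$. Vous ne faites qu'expliciter (d\'ecoupage en sous-surfaces, fermeture \`a la Birkhoff d'une feuille r\'ecurrente) la construction de cette courbe, dont l'article se contente d'affirmer l'existence.
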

\begin{proof}
Supposons que $N(\lambda') \subset N(\lambda)$. Comme $\lambda'$ est contenue dans $N(\lambda') \subset N(\lambda)$, il n'y a pas de composante de $\lambda$ qui rencontre $\lambda'$ transversalement.
Il s'ensuit que la compl\'etion de $|\lambda|$ ne rencontre pas non plus celle de $|\lambda'|$ transversalement.

Il suffit de prouver qu'il n'y a pas de composante minimale de $|\lambda|$ disjointe de la compl\'etion de $|\lambda'|$.
Supposons qu'il y en ait une, $l$.
Alors on peut trouver une g\'eod\'esique simple ferm\'ee disjointe de $|\lambda'|$ qui rencontre  $l$ transversalement, ce qui contredit l'hypoth\`ese que $N(\lambda') \subset N(\lambda)$.
\end{proof}

%

Pour abréger, on appellera \og courbe\fg une géodésique simple ferm\'ee de $S$. 

Dans ce qui suit, quand on parle de dimension d'un sous-espace de $\mathcal{ML}(S)$ ou de $\mathcal{PML}(S)$, il s'agit de dimension topologique, au sens de la structure de variété de ces espaces. (On peut se référer ici à la structure linéaire par morceaux, respectivement linéaire-projective par morceaux, de ces espaces.) Il sera sous-entendu que ces espaces sont des sous-variétés et que leur dimension est bien définie.

On démontre facilement le lemme suivant.
\begin{lemma}
\label{scc}
$\lambda$ est une courbe si et seulement si $\dim N(\lambda)=6g-8$.
\end{lemma}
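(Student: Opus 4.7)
My plan is to prove the equivalence by a direct dimension count in the piecewise-linear structure of $\mathcal{ML}(S)$ and $\mathcal{PML}(S)$, using the fact that $\mu \in N(\lambda)$ if and only if no minimal component of $\mu$ crosses $|\lambda|$ transversally.

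For the ``if'' direction, suppose $\lambda = \gamma$ is a simple closed curve. Any $\mu \in \mathcal{ML}(S)$ with $i(\mu, \gamma) = 0$ decomposes uniquely as $\mu = a\gamma + \nu$, where $a \geq 0$ and $\nu$ is a measured lamination supported in the open complement $S \setminus \gamma$. Since $\chi(S \setminus \gamma) = 2 - 2g$, Thurston's dimension formula gives $\dim \mathcal{ML}(S \setminus \gamma) = 6g - 8$, regardless of whether $\gamma$ separates or not (both cases reduce to the same count via $\chi$). Hence $\dim N(\gamma) = 6g - 7$ in $\mathcal{ML}(S)$, which becomes $6g - 8$ after projectivizing.

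For the converse, suppose $\lambda$ is not a simple closed curve, and write $\lambda = \lambda_1 + \cdots + \lambda_m$ in terms of its minimal components. Since $N(\lambda) = \bigcap_j N(\lambda_j)$, one has $\dim N(\lambda) \leq \min_j \dim N(\lambda_j)$. Two sub-cases arise. If every $\lambda_j$ is a simple closed curve, then $\lambda$ is a multicurve with $k \geq 2$ components, and a computation analogous to the ``if'' direction (cutting along all $k$ curves and summing the Euler characteristics of the complementary pieces) yields $\dim N(\lambda) = 6g - 7 - k \leq 6g - 9$. Otherwise, some $\lambda_j = \lambda_0$ is a minimal filling lamination of a subsurface $\Sigma_0 \subseteq S$ of genus $h_0$ and $n_0$ boundary components, of complexity $c_0 = 6h_0 - 6 + 2n_0 \geq 2$. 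In $\mathcal{ML}(S)$, $N(\lambda_0)$ is parametrized by sums $a\lambda_0 + \nu$ with $a$ in the transverse-measure cone $V(\lambda_0)$ and $\nu \in \mathcal{ML}(\overline{S \setminus \Sigma_0})$. Using Katok's bound $\dim V(\lambda_0) \leq 3h_0 - 3 + n_0$ and the Euler-characteristic calculation $\dim \mathcal{ML}(\overline{S \setminus \Sigma_0}) = 6g - 6h_0 - 4n_0$, one obtains $\dim N(\lambda_0) \leq 6g - 3h_0 - 3n_0 - 4$ in $\mathcal{PML}(S)$. The constraint $c_0 \geq 2$ forces $3h_0 + 3n_0 \geq 6$ (check the three cases $h_0 = 0, h_0 = 1, h_0 \geq 2$), hence $\dim N(\lambda_0) \leq 6g - 10 < 6g - 8$, contradicting $\dim N(\lambda) = 6g - 8$.

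The main obstacle is justifying the decomposition $\mu = a\lambda_0 + \nu$ rigorously and computing $\dim \mathcal{ML}(\overline{S \setminus \Sigma_0})$ without double-counting weights on $\partial \Sigma_0$. The decomposition rests on the filling property of $\lambda_0$: any minimal component of $\mu$ whose support meets $\mathrm{int}(\Sigma_0)$ must be a reweighting of $\lambda_0$ itself, while remaining components lie in $\overline{S \setminus \Sigma_0}$, which already contains $\partial \Sigma_0$, so boundary weights are naturally attached to $\nu$. A secondary technical point is that $\overline{S \setminus \Sigma_0}$ may be disconnected, but the $\chi$-based formula handles all such configurations uniformly by additivity.
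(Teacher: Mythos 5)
The paper offers no proof of this lemma: it is introduced with \og On d\'emontre facilement le lemme suivant\fg\ and the argument is left entirely to the reader, so there is nothing to compare line by line. Your proposal is the natural dimension count that the authors presumably had in mind, and it is essentially correct: the \og if\fg\ direction via $\mu=a\gamma+\nu$ and the Euler-characteristic count (note that the count $-3\chi-b$ depends on the total number $b$ of boundary circles as well as on $\chi$; it comes out to $6g-8$ in both the separating and nonseparating cases because $b=2$ in both, not because of $\chi$ alone), and the \og only if\fg\ direction by bounding $\dim N(\lambda_0)$ for a minimal component $\lambda_0$ that is not a curve, using the finite-dimensionality of the cone of transverse measures on $|\lambda_0|$. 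One bookkeeping point to fix: if the weights on $\partial\Sigma_0$ are \og attached to $\nu$\fg\ as you say, then $\dim\mathcal{ML}(\overline{S\setminus\Sigma_0})$ should be $6g-6h_0-4n_0+n_0=6g-6h_0-3n_0$, and your bound becomes $\dim N(\lambda_0)\le 6g-3h_0-2n_0-4$ in $\mathcal{PML}(S)$; since every admissible $(h_0,n_0)$ (complexity $\ge 2$, or $\Sigma_0=S$) satisfies $3h_0+2n_0\ge 5$, this is still $<6g-8$, so the conclusion survives, but as written the two sides of your decomposition leave the $n_0$ boundary weights unaccounted for. You also implicitly use that these null sets are PL subsets with well-defined, inclusion-monotone dimension; the paper explicitly assumes this (\og Il sera sous-entendu que ces espaces sont des sous-vari\'et\'es...\fg), so you are on the same footing as the authors there.
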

On en déduit que pour tout $n\geq 1$, $f$ pr\'eserve l'ensemble des courbes.

On donne maintenant une caractérisation des courbes multiples pond\'er\'ees de $S$ qui permettra  de montrer que $f$ préserve l'ensemble des courbes  multiples pond\'er\'ees à nombre de composantes fixé.
\begin{lemma}\label{lem:courbes}
Pour tout $n\geq 1$, $\lambda \in \mathcal{PML}(S)$ est une courbe multiple pond\'er\'ee à $n$ composantes si  et seulement si les deux propriétés suivantes sont satisfaites :
\begin{enumerate}
\item
$\dim N(\lambda)=6g-7-n$ ;
\item
 il existe une suite $\lambda_n=\lambda, \lambda_{n-1}, \dots, \lambda_{1}$ telle que pour tout $j=1,\ldots,n$, $N(\lambda_{j+1}) \subset N(\lambda_j)$ et $\dim N(\lambda_j)=6g-7-j$.
\end{enumerate}
\end{lemma}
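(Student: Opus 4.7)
The plan is to argue both implications by induction on $n$, with the base case $n=1$ supplied by Lemma \ref{scc}: $\dim N(\lambda) = 6g-8$ forces $\lambda$ to be a simple closed curve, and condition (2) is vacuous. For the direct implication at general $n$, given a weighted multicurve $\lambda = \sum_{i=1}^n a_i c_i$, I set $\lambda_j := \sum_{i=1}^j a_i c_i$; the nesting $|\lambda_j| \subset |\lambda_{j+1}|$ combined with property (*) yields $N(\lambda_{j+1}) \subset N(\lambda_j)$, giving (2). To establish $\dim N(\lambda_j) = 6g-7-j$ (and thereby (1) when $j=n$), I would cut $S$ along the $j$ simple closed curves of $|\lambda_j|$ to obtain complementary subsurfaces $R_1, \ldots, R_k$ with $\sum_i \chi(R_i) = 2 - 2g$; the space of measured laminations supported in $\bigsqcup_i R_i$, with boundary weights identified across adjacent subsurfaces, has $\mathcal{ML}$-dimension $\sum_i -3\chi(R_i) - j = 6g-6-j$, hence $\mathcal{PML}$-dimension $6g-7-j$.

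For the converse, suppose $\lambda = \lambda_n$ satisfies (1) and (2). The truncated sequence $\lambda_1, \ldots, \lambda_{n-1}$ witnesses (1), (2) for $\lambda_{n-1}$ at index $n-1$, so the inductive hypothesis gives $\lambda_{n-1} = \sum_{i=1}^{n-1} b_i c_i$ a weighted multicurve. Since a multicurve equals its own completion, Lemma \ref{compl} applied to $N(\lambda_n) \subset N(\lambda_{n-1})$ yields that $|\lambda_{n-1}|$ lies in the completion of $|\lambda_n|$. Moreover $\lambda_n$ belongs to $N(\lambda_{n-1})$, so its support lies in the union of the complementary subsurfaces $R_1, \ldots, R_k$ of $S \setminus |\lambda_{n-1}|$, with possible additional weights on the $c_i$ themselves. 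Decomposing $\lambda_n = \sum_i b_i' c_i + \tau$, where $\tau$ is supported in the interiors of the $R_i$, the dimension identity $\dim N(\lambda_n) = \dim N(\lambda_{n-1}) - 1$ forces $\tau$ to impose exactly one new codimension on $N(\lambda_{n-1})$. I then perform a case analysis, arguing that $\tau$ must be a single weighted simple closed curve in some $R_i$, which exhibits $\lambda_n$ as a weighted multicurve with $n$ components.

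The crux of the argument is excluding that $\tau$ contains a filling minimal lamination on some subsurface $\Sigma \subset R_i$. Such a filling component imposes codimension $-3\chi(\Sigma) - 1 - b(\Sigma) = 6g(\Sigma) - 7 + 2 b(\Sigma)$ on $\mathcal{ML}(\Sigma)$, which exceeds $1$ unless $\Sigma$ is a once-punctured torus or a four-holed sphere. The borderline cases must be eliminated by invoking the completion constraint of Lemma \ref{compl}, which forces $\partial \Sigma$ to lie in the completion of $|\lambda_n|$ and hence to be compatible with $|\lambda_{n-1}|$, combined with an iterative reading of the full sequence in (2): each step from $\lambda_j$ to $\lambda_{j+1}$ must be the atomic addition of a simple closed curve rather than a filling lamination, propagating the conclusion back from the base case $n=1$.
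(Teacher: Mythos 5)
Your overall route is the same as the paper's: the direct implication via the chain of partial multicurves and a dimension count, and the converse by induction along the chain, using Lemma \ref{compl} to nest the supports and the hypothesis $\dim N(\lambda_{j+1})=\dim N(\lambda_j)-1$ to force each step to add a single curve. The paper disposes of that last point in one line (if $|\lambda_{j+1}|\setminus|\lambda_j|$ is not a curve, the dimension drops by more than $1$), whereas you correctly observe that this needs justification and that the raw codimension count does not exclude a minimal lamination filling a once-holed torus or a four-holed sphere, both of which impose codimension exactly $1$.

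The step where you promise to eliminate these borderline cases by combining Lemma \ref{compl} with an iterative reading of the chain is a genuine gap, and it cannot be closed as described, because the once-holed torus configuration actually satisfies conditions (1) and (2). Take $\Sigma\subset S$ a once-holed torus, $\tau$ a (uniquely ergodic) minimal lamination filling $\Sigma$, and set $\lambda_2=\tau$, $\lambda_1=\partial\Sigma$. Every $\mu$ with $i(\mu,\tau)=0$ has the form $a\tau+b\,\partial\Sigma+\nu'$ with $\nu'$ supported in $S\setminus\Sigma$, so $N(\tau)\subset N(\partial\Sigma)$, and the cone over $N(\tau)$ has dimension $2+\bigl(6(g-1)-6+2\bigr)=6g-8$, i.e.\ $\dim N(\lambda_2)=6g-9=6g-7-2$, while $\dim N(\lambda_1)=6g-8$. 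Thus $(\lambda_1,\lambda_2)$ satisfies (1) and (2) with $n=2$, yet $\tau$ is not a weighted multicurve with two components; the completion constraint you invoke is satisfied here ($\partial\Sigma$ does lie in the completion of $|\tau|$), so it rules nothing out, and there is no base case to propagate from since the failure already occurs at the first step of the chain. (A filling lamination on a four-holed sphere yields a similar configuration with $n=5$.) The same objection applies to the paper's own one-line assertion; any correct argument must distinguish multicurves from such fillings of small subsurfaces by some additional property of the null sets, not by conditions (1) and (2) alone.
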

\begin{proof}
La partie \og seulement si\fg  découle  imm\'ediatement du lemme \ref{scc} et de  (*).

Pour d\'emontrer la partie \og si\fg, supposons que $\lambda_{j+1}$ soit une courbe multiple et que $N(\lambda_{j+1}) \subset N(\lambda_j)$.
Par le lemme \ref{compl}, on sait que $|\lambda_j|$ contient $|\lambda_{j+1}|$.
(Notons que la compl\'etion ne change pas $|\lambda_{j+1}|$, comme $\lambda_{j+1}$ est une courbe multiple.)
Si $|\lambda_j| \setminus |\lambda_{j+1}|$ n'est pas une courbe, alors $\dim N(\lambda_j) < \dim N(\lambda_{j+1})-1$, ce qui contredit l'hypoth\`ese.
\end{proof}

\begin{proof}[D\'emonstration du théorème \ref{pml1}]

Par le lemme \ref{lem:courbes}, l'homéomorphisme $f$ pr\'eserve l'ensemble des courbes multiples à nombre de composantes donné, et l'inclusion entre deux courbes multiples. Ainsi, $f$ induit un automorphisme du complexe de courbes $\mathcal{C}(S)$ de $S$. Par le théorème d'Ivanov \cite{Iv}, cet automorphisme est induit par un homéomorphisme $g$ de $S$ qui induit la m\^eme bijection sur $\mathcal C(S)$ que $f$.
Comme $\mathcal C(S)$ est dense dans $\mathcal{PML}(S)$,  la continuit\'e de $f$ et $g_*$ implique que $f=g_*$. 
\end{proof}

Passons maintenant à la démonstration du théorème \ref{ml1}. 
Pour une lamination mesurée $\lambda$, on définit son espace nul par $$\mathcal N(\lambda)=\{\mu \in \mathcal{ML}(S) \mid i(\lambda, \mu)=0\}.$$
De la même manière que le lemme \ref{scc}, on démontre que $\dim \mathcal N(\lambda)=6g-7$ si et seulement si $\lambda$ est une courbe pondérée.
De plus, l'espace nulle ne change pas même si l'on change le poids sur la courbe.
Il s'ensuit que pour toute courbe  $c$, l'homéomorphisme $f$ envoie le rayon $\mathbb{R}_+ c$ dans $\mathcal{ML}(S)$ sur le rayon $\mathbb{R}_+ f(c)$.
Comme les courbes  pondérées sont denses dans $\mathcal{ML}(S)$, on voit que $f$ envoie le rayon $\mathbb{R}_+ \lambda$ sur le rayon $\mathbb{R}_+ f(\lambda)$ pour toute lamination mesurée  $\lambda$.
Donc, l'hom\'eomorphisme $f$ du théorème \ref{ml1} induit un hom\'eomorphisme $\bar f:\mathcal{PML}(S)\to \mathcal{PML}(S)$ qui satisfait l'hypoth\`ese du théorème  \ref{pml1}.
Il existe alors, par le théorème \ref{pml1}, un automorphisme $g$ de $S$ tel que $g_*=\bar f$ sur $\mathcal{PML}(S)$.

\begin{lemma}
\label{add}
Soient $c_1, c_2$ deux courbes pond\'er\'ees disjointes.
Alors, $f(c_1 \cup c_2)=f(c_1)\cup f(c_2)$.
\end{lemma}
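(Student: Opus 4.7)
L'id\'ee est d'utiliser la pr\'eservation du nombre d'intersection par $f$ et la bilin\'earit\'e de $i$ sur les courbes multiples pond\'er\'ees \`a supports disjoints, puis de conclure en invoquant le fait qu'une lamination mesur\'ee est enti\`erement d\'etermin\'ee par ses nombres d'intersection avec les autres laminations mesur\'ees.

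On commence par pr\'eciser les supports. Comme $\bar f = g_*$ sur $\mathcal{PML}(S)$ et que $f$ envoie le rayon $\mathbb{R}_+ c_i$ sur le rayon $\mathbb{R}_+ g(c_i)$, on aura $f(c_i) = t_i\, g(c_i)$ pour un certain poids $t_i > 0$ ($i=1,2$). Puisque $c_1$ et $c_2$ sont disjointes et $g$ est un hom\'eomorphisme, les courbes $g(c_1)$ et $g(c_2)$ le sont \'egalement~; donc $f(c_1)+f(c_2)$ est une courbe multiple pond\'er\'ee bien d\'efinie, de support $g(c_1)\cup g(c_2)$. De m\^eme, $f(c_1\cup c_2)$ est projectivement \'egale \`a $g_*[c_1+c_2]$, donc s'\'ecrit $s_1 g(c_1) + s_2 g(c_2)$ pour certains $s_1, s_2 > 0$. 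Il s'agit donc de montrer que $s_i = t_i$.

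On comparera alors les nombres d'intersection de ces deux laminations avec une lamination arbitraire $\mu \in \mathcal{ML}(S)$. En posant $\nu = f^{-1}(\mu)$ et en utilisant la bilin\'earit\'e imm\'ediate de $i$ sur les courbes multiples pond\'er\'ees \`a supports disjoints, on aura
\[
i(\mu, f(c_1\cup c_2)) = i(\nu, c_1+c_2) = i(\nu, c_1) + i(\nu, c_2) = i(\mu, f(c_1)) + i(\mu, f(c_2)) = i(\mu, f(c_1)+f(c_2)).
\]

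Il restera enfin \`a invoquer le r\'esultat classique selon lequel une lamination mesur\'ee est enti\`erement d\'etermin\'ee par ses nombres d'intersection avec les autres laminations mesur\'ees (voir par exemple FLP, ou Bonahon), pour conclure que $f(c_1\cup c_2)=f(c_1)+f(c_2)$. Le seul point qui pourrait demander un soin particulier est cette derni\`ere invocation~; les deux premi\`eres \'etapes sont essentiellement formelles et suivent directement de l'identification de $\bar f$ avec $g_*$ obtenue plus haut.
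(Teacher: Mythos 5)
Votre preuve est correcte et suit essentiellement la m\^eme d\'emarche que celle de l'article : on transf\`ere l'additivit\'e de $i(\cdot, c_1\cup c_2)$ \`a travers $f$ gr\^ace \`a l'hypoth\`ese de pr\'eservation du nombre d'intersection, puis on conclut par le fait qu'une lamination mesur\'ee est d\'etermin\'ee par ses nombres d'intersection. La discussion pr\'eliminaire des supports via $g_*$ est correcte mais superflue, l'article obtenant la disjonction de $f(c_1)$ et $f(c_2)$ directement du th\'eor\`eme \ref{pml1}.
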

\begin{proof}
On a $i(., c_1 \cup c_2)=i(., c_1)+i(., c_2)$.
D'autre part,  si $i(.,\lambda)=i(., c_1)+i(., c_2)$ pour $c_1, c_2$ disjointes  alors on a $\lambda=c_1 \cup c_2$.
Par le théorème  \ref{pml1}, on sait que les deux courbes pondérées $f(c_1)$ et $f(c_2)$ sont disjointes.
Comme $i(., f(c_1 \cup c_2))=i(f^{-1}(.), c_1 \cup c_2)= i(f^{-1}(.), c_1)+i(f^{-1}(.), c_2)=i(., f(c_1))+i(., f(c_2))$, on a $f(c_1 \cup c_2)=f(c_1)\cup f(c_2)$.
\end{proof}

\'Etant donnée une courbe $c$, on pose $f(c)=m_c g(c)$ o\`u $g$ est un automorphisme obtenu par le théorème \ref{pml1} et $m_c$ un poids.
Pour compl\'eter la d\'emonstration du théorème \ref{ml1}, il suffit de démontrer le lemme suivant :

\begin{lemma}
$m_c=1$ pour toute courbe $c$.
\end{lemma}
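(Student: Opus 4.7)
The plan is to exploit the fact that both $f$ and $g$ preserve the geometric intersection number: $f$ by hypothesis of Theorem~\ref{ml1}, and $g$ because it is induced by a genuine homeomorphism of $S$. For any two simple closed curves $c, c'$ on $S$, bilinearity of $i$ under positive scaling gives
\[
i(c,c') = i\bigl(f(c),f(c')\bigr) = i\bigl(m_c\,g(c),\,m_{c'}\,g(c')\bigr) = m_c\, m_{c'}\, i\bigl(g(c),g(c')\bigr) = m_c\, m_{c'}\, i(c,c'),
\]
so that $m_c\, m_{c'} = 1$ whenever $i(c,c') \neq 0$.

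Given now an arbitrary curve $c$ on $S$, I would exhibit two further simple closed curves $c_1$ and $c_2$ such that the three intersection numbers $i(c,c_1)$, $i(c,c_2)$, $i(c_1,c_2)$ are all strictly positive. On a closed surface of genus $g \geq 2$ this is elementary: any curve $c$ admits a transverse simple closed curve $c_1$, and then a curve $c_2$ may be found in a regular neighbourhood of $c \cup c_1$ meeting both essentially.

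Combining with the previous step, one obtains the three relations $m_c\, m_{c_1} = m_c\, m_{c_2} = m_{c_1}\, m_{c_2} = 1$. Multiplying the first two and dividing by the third gives $m_c^2 = 1$, and since the weights $m_c$ are strictly positive this forces $m_c = 1$. This concludes the proof of the lemma and hence of Theorem~\ref{ml1}.

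The only point requiring a moment of care is the construction of a triple of pairwise transversely intersecting simple closed curves through any prescribed $c$, but on a closed hyperbolic surface this is standard, so the argument proceeds without any real obstacle.
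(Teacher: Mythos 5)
Your argument is correct, but it takes a genuinely different route from the paper's. You use only the multiplicativity of $i$ under scaling: from $i(c,c')=i(f(c),f(c'))=m_c m_{c'}\,i(c,c')$ you get $m_c m_{c'}=1$ whenever $i(c,c')\neq 0$, and a triple of pairwise transversally intersecting curves through $c$ (which indeed always exists on a closed surface of genus $\geq 2$, e.g.\ by taking $c_2$ to be the image of $c_1$ under the Dehn twist about $c$) then forces $m_c^2=1$, hence $m_c=1$. The paper instead first proves that $m_c$ is \emph{constant}: it uses Lemma~\ref{add} ($f(c_1\cup c_2)=f(c_1)\cup f(c_2)$ for disjoint curves, obtained from the additivity of $i(\cdot,c_1\cup c_2)$) to get $m_c=m_d$ for disjoint $c,d$, then invokes the connectedness of the curve complex to propagate this to all curves, and only at the end deduces $m=1$. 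Your version is shorter and bypasses both Lemma~\ref{add} and the connectivity of $\mathcal{C}(S)$, at the price of the (standard, but worth stating carefully) construction of the intersecting triple; the paper's version isolates the additivity statement, which is of independent use, and makes the final step $m=1$ essentially trivial once constancy is known. Both arguments rest on the same prior facts, namely that $f$ preserves rays and that $\bar f=g_*$ on $\mathcal{PML}(S)$, so that $f(c)=m_c g(c)$ makes sense.
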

\begin{proof}
Montrons que $m_c$ ne d\'epend pas de $c$.
D'abord on consid\`ere deux courbes disjointes $c$ et $d$.
Comme $f=g_*$ sur $\mathcal{PML}(S)$, on a $f(c\cup d)=m(g(c)\cup g(d))$.
Par le lemme \ref{add}, on a $f(c\cup d)=f(c)\cup f(d)$, et par d\'efinition $f(c)=m_cg(c)$ et $f(d)=m_d g(d)$.
Donc on a $m_c=m=m_d$.

\'Etant données deux courbes quelconques $c$ et $d$, il existe une  suite de courbes $c=c_1, \dots, c_n=d$ telles que $c_j$ et $c_{j+1}$ sont disjointes, par la connexit\'e du complexe des courbes.
Donc on a $m_c=m_1=m_2 = \dots =m_n=m_d$.

Ainsi, $m_c$ ne d\'epend pas de $c$, et de là il découle facilement que $m_c=1$.
\end{proof}

\begin{proof}[Démonstration des théorèmes \ref{ml} et \ref{pml}]

Par les théorèmes \ref{ml1} et \ref{pml1}, les homomorphismes    $\mathrm{Mod}^*(S)\to  \mathrm{Aut}(\mathcal{ML}(S))$ et $\mathrm{Mod}^*(S)\to  \mathrm{Aut}(\mathcal{PML}(S))$  sont surjectifs.
 Pour $g\geq3$, ils sont tous les deux injectifs car si deux éléments de $\mathrm{Mod}^*(S)$ ont la même action sur $\mathcal{ML}(S)$ ou sur $\mathcal{PML}(S)$, ils induisent le même automorphisme de $\mathcal{C}(S)$, et l'on sait par \cite{Iv} que l'homomorphisme  $\mathrm{Mod}^*(S)\to  \mathrm{Aut}(\mathcal{C}(S))$ est injectif. Pour $g=2$, le noyau de chacun des homomorphismes  $\mathrm{Mod}^*(S)\to  \mathrm{Aut}(\mathcal{ML}(S))$ et $\mathrm{Mod}^*(S)\to  \mathrm{Aut}(\mathcal{PML}(S))$   est $\mathbb{Z}/2$, ce qui découle aussi de \cite{Iv}, 
o\`u il est montré que dans ce cas l'homomorphisme naturel  $\mathrm{Mod}^*(S)\to  \mathrm{Aut}(\mathcal{C}(S))$ est 
 surjectif et son noyau est $\mathbb{Z}/2$, engendré par l'involution hyperelliptique de $S$.
 
\end{proof}
  
 \begin{remark} Les résultats de cet article peuvent être considérés comme des variations sur des résultats de Feng Luo dans \cite{Lu2}, même si les énoncés et les démonstrations sont diff\'erents. En particulier, Luo étudie sur $\mathcal{ML}(S)$ des automorphismes d'espaces de fonctions associées à des courbes, ainsi que des espaces de zéros de ces fonctions, et dans ce cas l'action induite sur le complexe de courbes $\mathcal{C}(S)$ est immédiate.
 \end{remark}

\end{document}